\theoremstyle{plain}
\newtheorem{theorem}{\bf Theorem}
\newtheorem{proposition}[theorem]{\bf Proposition}
\newtheorem{lemma}[theorem]{\bf Lemma}
\theoremstyle{definition}
\numberwithin{theorem}{section}
\numberwithin{equation}{section}
\newcommand{\Rea}{{\mathbb R}}
\newcommand{\eigen}[3]{\lambda_{\text{#1}}^{(#2)}(#3)}
\newcommand{\angles}[2]{#2^T #1}
\begin{document}

\title{Garland's method for token graphs}
\author{Alan Lew\footnote{e-mail: alanlew@andrew.cmu.edu}}
 \affil{Dept. Math. Sciences, Carnegie Mellon University, Pittsburgh, PA 15213, USA}

	\date{}
	\maketitle
\begin{abstract}
    The $k$-th token graph of a graph $G=(V,E)$ is the graph $F_k(G)$ whose vertices are the $k$-subsets of $V$ and whose edges are all pairs of $k$-subsets $A,B$ such that the symmetric difference of $A$ and $B$ forms an edge in $G$. 
    Let $L(G)$ be the Laplacian matrix of $G$, and $L_k(G)$ be the Laplacian matrix of $F_k(G)$.
    It was shown by Dalf\'o et al. that for any graph $G$ on $n$ vertices and any $0\leq \ell \leq k \leq \left\lfloor n/2\right\rfloor$, the  spectrum of $L_{\ell}(G)$ is contained in that of $L_k(G)$.

    Here, we continue to study  the relation between the spectrum of $L_k(G)$ and that of $L_{k-1}(G)$. In particular, we show that, for $1\leq k\leq \left\lfloor n/2\right\rfloor$, any eigenvalue $\lambda$ of $L_k(G)$ that is not contained in the spectrum of $L_{k-1}(G)$ satisfies \[
    k(\lambda_2(L(G))-k+1)\leq \lambda \leq k\lambda_n(L(G)),
\]
    where $\lambda_2(L(G))$ is the second smallest eigenvalue of $L(G)$ (a.k.a. the algebraic connectivity of $G$), and $\lambda_n(L(G))$ is its largest eigenvalue.
Our proof relies on an adaptation of Garland's 
    method, 
    originally developed for the study of high-dimensional Laplacians of simplicial complexes.
\end{abstract}
\hspace{10pt}


\section{Introduction}

Let $G=(V,E)$ be a graph. The \emph{$k$-th token graph} of $G$, denoted by $F_k(G)$, is the graph on vertex set $\binom{V}{k}$ whose edges are the pairs $\{A,B\}$ with $|A\cap B|=k-1$ and $A\triangle B=(A\setminus B)\cup (B\setminus A) \in E$.
Token graphs were originally defined by Johns in \cite{johns1988generalized} under the name of \emph{$k$-tuple vertex graphs} (see also e.g. \cite{alavi1991double,wright1992n,alavi2002survey}). 
In \cite{audenaert2007symmetric}, they were reintroduced under the name of \emph{$k$-th symmetric powers}.  Finally, in \cite{fabila2012token}, they were introduced once again under their current name.
Token graphs also appear implicitly in the study of the ``symmetric exclusion process" on graphs, introduced by Spitzer in \cite{SPITZER1970246} (see also e.g. \cite{caputo2010proof}).
Note that for $k=0$ the graph $F_0(G)$ is just the graph with one vertex (corresponding to the empty set) and no edges, for $k=1$ we have $F_1(G)\cong G$, and, if $|V|=n$, then $F_k(G)\cong F_{n-k}(G)$ for all $0\leq k\leq n$ (see e.g. \cite{fabila2012token}).

For a symmetric matrix $M\in\Rea^{m\times m}$, we denote by $\lambda_i(M)$ its $i$-th smallest eigenvalue.
Let $L(G)$ be the Laplacian matrix of $G$, and let $L_k(G)=L(F_k(G))$ be the Laplacian of its $k$-th token graph.
The Laplacian spectrum of token graphs was studied by Dalf\'o, Duque, Fabila-Monroy, Fiol, Huemer,
Trujillo-Negrete and Mart\'inez in \cite{dalfo2021laplacian}.
In particular, in was shown in \cite{dalfo2021laplacian} that for any $0\leq \ell \leq  k\leq \lfloor n/2\rfloor$ the spectrum of $L_{\ell}(G)$ is contained in the spectrum of $L_k(G)$. 

Let $1\leq k\leq \lfloor n/2 \rfloor$, and let $\lambda$ be an eigenvalue of $L_k(G)$. We say that $\lambda$ is \emph{non-trivial} if the multiplicity of $\lambda$ as an eigenvalue of $L_k(G)$ is larger than its multiplicity as an eigenvalue of $L_{k-1}(G)$. In particular, any eigenvalue of $L_k(G)$ that is not contained in the spectrum of $L_{k-1}(G)$ is non-trivial. 
We denote the maximal eigenvalue of $L_k(G)$ by $\eigen{max}{k}{G}$, and its minimal non-trivial eigenvalue by $\eigen{min}{k}{G}$. For example, for $k=1$, we have $\eigen{max}{1}{G}=\lambda_n(L(G))$ and $\eigen{min}{1}{G}=\lambda_2(L(G))$.
Our main result consists of the following bounds on $\eigen{max}{k}{G}$ and $\eigen{min}{k}{G}$.

\begin{theorem}\label{thm:step}
Let $G=(V,E)$ be a graph with $|V|=n$, and let $2\leq k\leq \lfloor n/2\rfloor$. Then
\[
  \eigen{max}{k}{G} \leq \frac{k}{k-1} \eigen{max}{k-1}{G}
\]
and
\[
\eigen{min}{k}{G}\geq \frac{k}{k-1}\eigen{min}{k-1}{G} - k.
\]
\end{theorem}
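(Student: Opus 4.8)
The plan is to establish a Garland-type identity expressing the Laplacian quadratic form of $F_k(G)$ in terms of those of $F_{k-1}(G)$ attached to the vertices of $G$, and then to combine it with a description of which part of the spectrum of $L_k(G)$ is ``non-trivial''. For $g\colon\binom{V}{k}\to\Rea$ and $v\in V$, define $g^{(v)}\colon\binom{V}{k-1}\to\Rea$ by $g^{(v)}(B)=g(B\cup\{v\})$ if $v\notin B$ and $g^{(v)}(B)=0$ if $v\in B$. Writing each Laplacian quadratic form as the sum of squared differences over the edges of the corresponding token graph and double counting, I expect to obtain
\[
(k-1)\,\big\langle L_k(G)g,g\big\rangle \;=\; \sum_{v\in V}\big\langle L_{k-1}(G)\,g^{(v)},g^{(v)}\big\rangle \;-\; 2\,\langle Dg,g\rangle ,
\]
where $D$ is the diagonal matrix indexed by $\binom{V}{k}$ with $D_{A,A}=|E(G[A])|$. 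The verification is a bookkeeping: for an edge $\{T\cup\{p\},T\cup\{q\}\}$ of $F_{k-1}(G)$ and a vertex $v$, the corresponding summand on the right equals $\big(g(T\cup\{p,v\})-g(T\cup\{q,v\})\big)^2$ if $v\notin T\cup\{p,q\}$ — and running over all such pairs reproduces every edge of $F_k(G)$ exactly $k-1$ times — it equals $g(T\cup\{p,q\})^2$ if $v\in\{p,q\}$, which produces the term $2\langle Dg,g\rangle$, and it vanishes if $v\in T$. I will also record the elementary facts $\sum_{v\in V}\|g^{(v)}\|^2=k\,\|g\|^2$ and $0\preceq D\preceq\binom{k}{2}I$.

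The bound on $\eigen{max}{k}{G}$ is then immediate: since $\langle Dg,g\rangle\ge0$ and $\big\langle L_{k-1}(G)g^{(v)},g^{(v)}\big\rangle\le\eigen{max}{k-1}{G}\,\|g^{(v)}\|^2$ for each $v$, the identity gives $(k-1)\langle L_k(G)g,g\rangle\le\eigen{max}{k-1}{G}\sum_{v}\|g^{(v)}\|^2=k\,\eigen{max}{k-1}{G}\,\|g\|^2$ for all $g$.

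For $\eigen{min}{k}{G}$ I must apply the identity only to the ``non-trivial'' part of $\Rea^{\binom{V}{k}}$. Let $U_k\colon\Rea^{\binom{V}{k-1}}\to\Rea^{\binom{V}{k}}$ be the up-map $(U_kh)(A)=\sum_{v\in A}h(A\setminus\{v\})$. A short computation, in which cross terms cancel in pairs, gives the intertwining $L_k(G)U_k=U_kL_{k-1}(G)$, and $U_k$ is injective for $k\le\lfloor n/2\rfloor$ (cf. the spectral embedding of \cite{dalfo2021laplacian}). Hence $\mathcal{A}_k:=\operatorname{Im}(U_k)$ and its orthogonal complement $\mathcal{A}_k^{\perp}$ are invariant under the symmetric matrix $L_k(G)$, the restriction $L_k(G)|_{\mathcal{A}_k}$ is similar via $U_k$ to $L_{k-1}(G)$, and so $\operatorname{spec}L_k(G)=\operatorname{spec}L_{k-1}(G)\uplus\operatorname{spec}\big(L_k(G)|_{\mathcal{A}_k^{\perp}}\big)$ as multisets; in particular the non-trivial eigenvalues of $L_k(G)$ are precisely those of $L_k(G)|_{\mathcal{A}_k^{\perp}}$, so $\eigen{min}{k}{G}=\min\{\langle L_k(G)g,g\rangle/\|g\|^2:0\ne g\in\mathcal{A}_k^{\perp}\}$, and analogously one level down. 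The key observation is that $g\mapsto g^{(v)}$ maps $\mathcal{A}_k^{\perp}$ into $\mathcal{A}_{k-1}^{\perp}$: writing $R_vg:=g^{(v)}$, its adjoint is $(R_v^{*}h)(A)=h(A\setminus\{v\})$ for $v\in A$ and $0$ otherwise, and one checks directly that $U_k\circ R_v^{*}=R_v^{*}\circ U_{k-1}$, whence $R_v^{*}(\mathcal{A}_{k-1})\subseteq\mathcal{A}_k$ and therefore $\langle g^{(v)},h\rangle=\langle g,R_v^{*}h\rangle=0$ for all $g\in\mathcal{A}_k^{\perp}$ and $h\in\mathcal{A}_{k-1}$. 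Since $\mathcal{A}_{k-1}^{\perp}$ is $L_{k-1}(G)$-invariant with least eigenvalue $\eigen{min}{k-1}{G}$, this gives $\big\langle L_{k-1}(G)g^{(v)},g^{(v)}\big\rangle\ge\eigen{min}{k-1}{G}\,\|g^{(v)}\|^2$; feeding this together with $\langle Dg,g\rangle\le\binom{k}{2}\|g\|^2$ into the Garland identity and dividing by $k-1$ yields $\langle L_k(G)g,g\rangle\ge\big(\tfrac{k}{k-1}\eigen{min}{k-1}{G}-k\big)\|g\|^2$ for every $g\in\mathcal{A}_k^{\perp}$, and minimising over $\mathcal{A}_k^{\perp}$ finishes the proof.

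The main obstacle will be the Garland identity itself — in particular pinning down the correction term exactly (that it is the diagonal matrix counting the edges of $G$ inside each $k$-set, and that $\binom{k}{2}=\tfrac{k(k-1)}{2}$ makes this correction contribute exactly $k$ after dividing by $k-1$), since this is what yields the clean constants in the statement. The remaining ingredients — the two intertwining relations $L_k(G)U_k=U_kL_{k-1}(G)$ and $U_kR_v^{*}=R_v^{*}U_{k-1}$, the injectivity of $U_k$ (where the hypothesis $k\le\lfloor n/2\rfloor$ is used), and the identification of the non-trivial eigenvalues with those of $L_k(G)|_{\mathcal{A}_k^{\perp}}$ — are routine linear algebra and combinatorial bookkeeping.
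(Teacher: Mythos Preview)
Your proposal is correct and follows essentially the same approach as the paper: the same Garland-type identity with the diagonal correction $D$, the same norm identity $\sum_v\|g^{(v)}\|^2=k\|g\|^2$, and the same use of $D\preceq\binom{k}{2}I$ and of the kernel condition to access $\eigen{min}{k-1}{G}$. The only cosmetic difference is that you verify $g^{(v)}\in\mathcal{A}_{k-1}^{\perp}$ via the adjoint intertwining $U_kR_v^{*}=R_v^{*}U_{k-1}$, whereas the paper checks $B_{n,k-1,k-2}^{T}\phi_u=0$ by direct computation on each $(k-2)$-set; these are equivalent.
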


As a consequence, we obtain the following bounds on the non-trivial spectrum of $L_k(G)$.

\begin{theorem}\label{thm:main_intro}
Let $G=(V,E)$ be a graph with $|V|=n$, and let $1\leq k\leq \lfloor n/2 \rfloor$. Let $\lambda$ be a non-trivial eigenvalue of $L_k(G)$. Then,
\[
    k(\lambda_2(L(G))-k+1)\leq \lambda \leq k\lambda_n(L(G)).
\]
\end{theorem}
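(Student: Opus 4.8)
The plan is to derive Theorem~\ref{thm:main_intro} directly from Theorem~\ref{thm:step} by a short induction on $k$; essentially all of the real work is done in the Garland-type comparison of Theorem~\ref{thm:step}, which we may assume. First I would observe that the claim about an arbitrary non-trivial eigenvalue reduces to bounding the two extreme quantities: by the definitions in the introduction, $\eigen{min}{k}{G}$ is the smallest non-trivial eigenvalue of $L_k(G)$ and $\eigen{max}{k}{G}$ is its largest eigenvalue, so any non-trivial eigenvalue $\lambda$ satisfies $\eigen{min}{k}{G}\le\lambda\le\eigen{max}{k}{G}$. It therefore suffices to prove, for every $1\le k\le\lfloor n/2\rfloor$,
\[
\eigen{max}{k}{G}\le k\,\lambda_n(L(G))\qquad\text{and}\qquad \eigen{min}{k}{G}\ge k\bigl(\lambda_2(L(G))-k+1\bigr).
\]

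For the base case $k=1$ I would use the identifications $\eigen{max}{1}{G}=\lambda_n(L(G))$ and $\eigen{min}{1}{G}=\lambda_2(L(G))$ recorded in the introduction, which already give both inequalities (with equality, since $1\cdot(\lambda_2(L(G))-1+1)=\lambda_2(L(G))$). For $k\ge 2$ I would induct using Theorem~\ref{thm:step}. The upper bound telescopes at once:
\[
\eigen{max}{k}{G}\le\frac{k}{k-1}\,\eigen{max}{k-1}{G}\le\cdots\le\Bigl(\prod_{j=2}^{k}\frac{j}{j-1}\Bigr)\eigen{max}{1}{G}=k\,\lambda_n(L(G)).
\]
For the lower bound, assuming inductively that $\eigen{min}{k-1}{G}\ge(k-1)\bigl(\lambda_2(L(G))-k+2\bigr)$, the second inequality of Theorem~\ref{thm:step} gives
\[
\eigen{min}{k}{G}\ge\frac{k}{k-1}\,\eigen{min}{k-1}{G}-k\ge\frac{k}{k-1}(k-1)\bigl(\lambda_2(L(G))-k+2\bigr)-k=k\bigl(\lambda_2(L(G))-k+1\bigr),
\]
which closes the induction.

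The only point that needs a little care is checking that the additive error terms $-k$ coming from Theorem~\ref{thm:step} accumulate to exactly the correction $-k(k-1)$ appearing in the final bound; a convenient way to anticipate this is to look for a bound of the shape $\eigen{min}{k}{G}\ge k\,\lambda_2(L(G))-c_k$, which forces the recursion $c_k=\tfrac{k}{k-1}c_{k-1}+k$ with $c_1=0$, and to verify that $c_k=k(k-1)$ is its solution. I do not expect any genuine obstacle at this stage: the substance of Theorem~\ref{thm:main_intro} lies entirely in Theorem~\ref{thm:step}, and what remains is this one-line induction together with the elementary bookkeeping just described.
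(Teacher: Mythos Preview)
Your proposal is correct and follows essentially the same approach as the paper: reduce the statement to bounding $\eigen{min}{k}{G}$ and $\eigen{max}{k}{G}$, handle the base case $k=1$ via the identifications $\eigen{min}{1}{G}=\lambda_2(L(G))$ and $\eigen{max}{1}{G}=\lambda_n(L(G))$, and then induct using Theorem~\ref{thm:step}. The paper's proof is exactly this induction, just written more tersely and without the telescoping/recursion bookkeeping you include.
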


Both inequalities in Theorem \ref{thm:main_intro} are tight: the lower bound is attained when $G$ is a complete balanced multi-partite graph with at least $k$ parts, and the upper bound is attained when $G$ is the union of at least $k$ disjoint cliques, all of the same size (see Section \ref{sec:examples}).

It was conjectured in \cite{dalfo2021laplacian} (see also \cite{dalfo2022algebraic,reyesalgebraic}) that for any graph $G$ on $n$ vertices and any $1\leq k\leq n-1$, $\lambda_2(L(G))=\lambda_2(L_k(G))$. In fact, as mentioned in \cite{ouyang2019computing},
this follows as a special case of  Aldous' spectral gap conjecture, proved by Caputo, Liggett and Richthammer (see  \cite[Section 4.1.1]{caputo2010proof}). Note that, in the special case when $\lambda_2(G)\geq k$, the equality $\lambda_2(L(G))=\lambda_2(L_k(G))$ follows immediately from the lower bound in Theorem \ref{thm:main_intro}.

Our proof of Theorem \ref{thm:step} relies on an adaptation of Garland's ``local to global" method (\cite{garland1973p}, see also \cite{ballmann1997l2,zuk2003property}). In its original form, Garland's method relates between the spectrum of a high-dimensional Laplacian matrix on a simplicial complex to the Laplacian spectra of certain subgraphs of the complex.
In \cite{aharoni2005eigenvalues}, Aharoni, Berger and Meshulam developed a ``global version" of Garland's argument, relating the spectrum of a high-dimensional Laplacian matrix on the clique complex of a graph $G$ to the Laplacian spectrum of $G$. This relation was later extended in \cite{lew2020spectral} to more general classes of simplicial complexes. Our argument here can be seen as an analogue of the argument in \cite{aharoni2005eigenvalues}, and is motivated by the similarity between the Laplacian of the $k$-th token graph of a graph $G$ and the $(k-1)$-dimensional Laplacian of the clique complex of $G$.

The paper is organized as follows. In Section \ref{sec:prelims} we present some background material on Laplacian matrices and on the Laplacian spectrum of token graphs. In Section \ref{sec:garland} we prove our main results, Theorems \ref{thm:step} and \ref{thm:main_intro}. In Section \ref{sec:examples} we present extremal examples showing the sharpness of Theorem \ref{thm:main_intro}.

\section{Preliminaries}\label{sec:prelims}

Let $G=(V,E)$ be a graph with $|V|=n$. For convenience, we will assume $V=[n]$.
For a vertex $v\in V$, let $d_G(v)=|\{e\in E:\, v\in e\}|$ be the \emph{degree} of $v$ in $G$.
The \emph{Laplacian matrix} $L(G)\in \Rea^{n\times n}$ is defined as
\[
    L(G)_{u,v}= \begin{cases}
            d_G(v) & \text{ if } u=v,\\
            -1 & \text{ if } \{u,v\}\in E,\\
            0 & \text{ otherwise.}
    \end{cases}
\]

For $0\leq \ell\leq k\leq n$, let $B_{n,k,\ell}\in \Rea^{\binom{n}{k}\times \binom{n}{\ell}}$ be a matrix with rows indexed by the $k$-subsets of $[n]$ and columns indexed by its $\ell$-subsets, with elements
\[
    (B_{n,k,\ell})_{\sigma,\eta}= \begin{cases}
    1 & \text{ if } \eta\subset \sigma,\\
    0 & \text{ otherwise,}
    \end{cases}
\]
for $\sigma\in\binom{[n]}{k}$ and $\eta\in \binom{[n]}{\ell}$.
It is well known (see e.g. \cite{gottlieb1966certain,graver1973module,wilson1973necessary}) that for $k\leq \lfloor n/2\rfloor$, $B_{n,k,\ell}$ has rank $\binom{n}{\ell}$. In \cite[Theorem 4.3]{dalfo2021laplacian}, it was shown that for any $1\leq k\leq n$,
\begin{equation}\label{eq:BLLB}
        B_{n,k,k-1} L_{k-1}(G)= L_k(G) B_{n,k,k-1}.
\end{equation}
Equation \eqref{eq:BLLB} implies that both $\text{Im}(B_{n,k,k-1})$ and $\text{Im} (B_{n,k,k-1})^{\perp}= \text{Ker} (B_{n,k,k-1}^T)$ are invariant subspaces of $L_k(G)$. Moreover, for $1\leq k\leq \lfloor n/2\rfloor$, using \eqref{eq:BLLB} and the fact that $B_{n,k,k-1}$ has full column rank, we obtain that if $\phi_1,\ldots,\phi_{\binom{n}{k-1}}$ form a basis of $\Rea^{\binom{n}{k-1}}$ consisting of eigenvectors of $L_{k-1}(G)$, with eigenvalues $\lambda_1,\ldots,\lambda_{\binom{n}{k-1}}$ respectively, then $B_{n,k,k-1}\phi_1,\ldots, B_{n,k,k-1}\phi_{\binom{n}{k-1}}$ form a basis of $\text{Im}(B_{n,k,k-1})$ consisting of eigenvectors of $L_k(G)$, with the same eigenvalues $\lambda_1,\ldots,\lambda_{\binom{n}{k-1}}$ (see \cite[Corollary 4.5]{dalfo2021laplacian}). In particular, the spectrum of $L_{k-1}(G)$ is contained (including multiplicities) in the spectrum of $L_k(G)$.
As immediate consequences, we obtain the following useful results:

\begin{lemma}
\label{lemma:non_trivial} Let $1\leq k\leq \lfloor n/2 \rfloor$, and let $\lambda$ be an eigenvalue of $L_k(G)$. Then, $\lambda$ is non-trivial if and only if there exists an eigenvector $\phi$ of $L_k(G)$ with eigenvalue $\lambda$ satisfying $B^{T}_{n,k,k-1}\phi=0$.
\end{lemma}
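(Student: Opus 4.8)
The plan is to unpack the definition of "non-trivial" in terms of the invariant subspace decomposition $\Rea^{\binom{n}{k}} = \mathrm{Im}(B_{n,k,k-1}) \oplus \mathrm{Ker}(B_{n,k,k-1}^T)$ established just above the lemma. Since both summands are $L_k(G)$-invariant, the spectrum of $L_k(G)$ (with multiplicities) is the disjoint union of the spectrum of $L_k(G)$ restricted to $\mathrm{Im}(B_{n,k,k-1})$ and the spectrum restricted to $\mathrm{Ker}(B_{n,k,k-1}^T)$. By the discussion preceding the lemma, $B_{n,k,k-1}$ maps an eigenbasis of $L_{k-1}(G)$ to an eigenbasis of $L_k(G)|_{\mathrm{Im}(B_{n,k,k-1})}$ preserving eigenvalues, and since $B_{n,k,k-1}$ has full column rank for $k \le \lfloor n/2\rfloor$, this is a bijection of eigenbases. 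Hence the multiplicity of $\lambda$ as an eigenvalue of $L_k(G)|_{\mathrm{Im}(B_{n,k,k-1})}$ equals its multiplicity as an eigenvalue of $L_{k-1}(G)$.

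Given that identification, the multiplicity of $\lambda$ in $L_k(G)$ exceeds its multiplicity in $L_{k-1}(G)$ if and only if the multiplicity of $\lambda$ in $L_k(G)|_{\mathrm{Ker}(B_{n,k,k-1}^T)}$ is strictly positive, i.e.\ if and only if there is a nonzero vector $\phi \in \mathrm{Ker}(B_{n,k,k-1}^T)$ with $L_k(G)\phi = \lambda\phi$. But $\mathrm{Ker}(B_{n,k,k-1}^T)$ is exactly the set of $\phi$ with $B_{n,k,k-1}^T\phi = 0$, which is the stated condition. This gives both directions at once.

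The one point requiring a little care is making the multiplicity bookkeeping rigorous: since $L_k(G)$ is symmetric and the two subspaces are orthogonal complements and both invariant, $L_k(G)$ is block-diagonal with respect to an orthonormal basis adapted to this splitting, so eigenvalue multiplicities genuinely add across the two blocks. I expect this to be the only step that needs to be spelled out; everything else is a direct quotation of the facts already assembled in this section. The argument is short and self-contained — there is no real obstacle, just the need to state the multiplicity decomposition cleanly.
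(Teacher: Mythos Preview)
Your proposal is correct and matches the paper's approach exactly: the paper states this lemma as an ``immediate consequence'' of the preceding discussion (the $L_k(G)$-invariant orthogonal decomposition $\Rea^{\binom{n}{k}} = \mathrm{Im}(B_{n,k,k-1}) \oplus \mathrm{Ker}(B_{n,k,k-1}^T)$ and the identification of the spectrum on the first summand with that of $L_{k-1}(G)$), and your argument is precisely the spelling-out of that consequence.
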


\begin{lemma}\label{lemma:lambda_min}
Let $1\leq k\leq \lfloor n/2 \rfloor$. Then
\[
    \eigen{min}{k}{G}= \min\left\{\frac{\angles{L_k(G)\phi}{\phi}}{\|\phi\|^2} \, : \, 0\neq \phi\in\Rea^{\binom{n}{k}},\, B^T_{n,k,k-1}\phi=0\right\}.
\]
\end{lemma}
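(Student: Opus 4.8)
The plan is to derive the lemma directly from Lemma \ref{lemma:non_trivial} and the observation, recorded above as a consequence of \eqref{eq:BLLB}, that the subspace
\[
    W := \text{Ker}(B^T_{n,k,k-1}) = \text{Im}(B_{n,k,k-1})^{\perp}
\]
is invariant under the symmetric matrix $L_k(G)$. First I would note that $W\neq\{0\}$: since $1\leq k\leq\lfloor n/2\rfloor$, the matrix $B_{n,k,k-1}$ has rank $\binom{n}{k-1}<\binom{n}{k}$, so $\dim W=\binom{n}{k}-\binom{n}{k-1}\geq 1$. In particular the set appearing on the right-hand side of the claimed identity is nonempty, so its infimum is a genuine minimum, and $L_k(G)$ has at least one non-trivial eigenvalue.

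Since $L_k(G)$ is symmetric and $W$ is $L_k(G)$-invariant, $W$ admits an orthonormal basis $\psi_1,\ldots,\psi_m$ (with $m=\dim W$) consisting of eigenvectors of $L_k(G)$; let $\mu_i$ denote the eigenvalue of $\psi_i$. The key step is to identify the set $\{\mu_1,\ldots,\mu_m\}$ with the set of non-trivial eigenvalues of $L_k(G)$: by Lemma \ref{lemma:non_trivial}, an eigenvalue $\lambda$ of $L_k(G)$ is non-trivial if and only if it has an eigenvector lying in $W$, and since $\psi_1,\ldots,\psi_m$ span $W$, the latter holds precisely when $\lambda\in\{\mu_1,\ldots,\mu_m\}$. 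Consequently $\eigen{min}{k}{G}=\min\{\mu_1,\ldots,\mu_m\}$.

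It then remains to carry out the standard Rayleigh-quotient computation for the minimal eigenvalue of the restriction of $L_k(G)$ to $W$: for any $0\neq\phi\in W$, expanding $\phi=\sum_{i=1}^{m}c_i\psi_i$ gives $\angles{L_k(G)\phi}{\phi}=\sum_{i=1}^{m}\mu_i c_i^2$ and $\|\phi\|^2=\sum_{i=1}^{m}c_i^2$, whence $\angles{L_k(G)\phi}{\phi}/\|\phi\|^2\geq\min_i\mu_i$, with equality when $\phi$ is an eigenvector attaining $\min_i\mu_i$. Combined with the previous paragraph, this yields
\[
    \min\left\{\frac{\angles{L_k(G)\phi}{\phi}}{\|\phi\|^2}\,:\,0\neq\phi\in\Rea^{\binom{n}{k}},\ B^T_{n,k,k-1}\phi=0\right\}=\min\{\mu_1,\ldots,\mu_m\}=\eigen{min}{k}{G},
\]
which is precisely the asserted formula. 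I do not expect a genuine obstacle here; the one point that requires care is the equivalence ``$\lambda$ non-trivial $\iff$ $\lambda$ is an eigenvalue of $L_k(G)$ restricted to $W$'', and this is exactly what Lemma \ref{lemma:non_trivial} (together with the spectral-containment fact recalled just before it) supplies.
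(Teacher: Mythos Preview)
Your proposal is correct and follows exactly the line of reasoning the paper has in mind: in the paper, Lemma~\ref{lemma:lambda_min} is stated without proof as an ``immediate consequence'' of the $L_k(G)$-invariance of $\text{Ker}(B_{n,k,k-1}^T)$ and the eigenbasis description preceding it, and your argument simply spells out those details (nonemptiness of $W$, diagonalizing the restriction, identifying the non-trivial eigenvalues via Lemma~\ref{lemma:non_trivial}, and the standard Rayleigh-quotient computation).
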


\section{A Garland-type argument}\label{sec:garland}

In this section we prove our main results, Theorems \ref{thm:step} and \ref{thm:main_intro}. 
Let $G=([n],E)$ be a graph, and let $2\leq k\leq \lfloor n/2 \rfloor$. 
Let $L=L(G)$ and $L_k=L_k(G)=L(F_k(G))$. We will denote  the edge set of $F_k(G)$ by $E_k$. Moreover, for $\sigma\in\binom{[n]}{k}$, let $d_k(\sigma)=d_{F_k(G)}(\sigma)$ be the degree of $\sigma$ in $F_k(G)$.

Let $\phi\in \Rea^{\binom{n}{k}}$. For any $u\in [n]$, we define $\phi_{u}\in \Rea^{\binom{n}{k-1}}$ by
\[
\phi_{u}(\tau)=\begin{cases}
    \phi(\tau\cup\{u\}) & \text{ if } u\notin \tau,\\
    0 & \text{ if } u\in\tau,
    \end{cases}
\]
for any $\tau\in\binom{[n]}{k-1}$.
For a set $\sigma\in\binom{[n]}{k}$, denote 
$
    E_{\sigma}=\{e\in E:\, e\subset \sigma\}.
$
We define a diagonal matrix $D_k\in \Rea^{\binom{n}{k}\times \binom{n}{k}}$ by
\begin{equation}\label{eq:dk}
    (D_k)_{\sigma,\tau}=\begin{cases}
        |E_{\sigma}| & \text{ if } \sigma=\tau,\\
        0 & \text{ otherwise,}
    \end{cases}
\end{equation}
for all $\sigma,\tau\in\binom{[n]}{k}$.
Theorem \ref{thm:step} will follow from the following identity:
\begin{proposition}\label{prop:form2}
    Let $\phi\in \Rea^{\binom{n}{k}}$. Then
    \[
    (k-1)\angles{L_k\phi}{\phi}= \left(\sum_{u=1}^n \angles{L_{k-1}\phi_{u}}{\phi_{u}}\right) - 2\angles{D_k\phi}{\phi}.
    \]
\end{proposition}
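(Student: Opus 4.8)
The plan is to expand both sides of the claimed identity as explicit sums over the relevant combinatorial data and match them term by term. The left-hand side involves $\langle L_k\phi,\phi\rangle$, which by the standard quadratic-form expression for a graph Laplacian equals $\sum_{\{\sigma,\sigma'\}\in E_k}(\phi(\sigma)-\phi(\sigma'))^2$. Each edge of $F_k(G)$ corresponds to a pair $\sigma=\tau\cup\{u\}$, $\sigma'=\tau\cup\{v\}$ where $\tau\in\binom{[n]}{k-1}$, $u,v\notin\tau$, and $\{u,v\}\in E$. So the left side is
\[
(k-1)\sum_{\tau\in\binom{[n]}{k-1}}\ \sum_{\substack{\{u,v\}\in E\\ u,v\notin\tau}}(\phi(\tau\cup\{u\})-\phi(\tau\cup\{v\}))^2 \;=\;(k-1)\sum_{\tau}\sum_{\substack{\{u,v\}\in E\\ u,v\notin\tau}}(\phi_u(\tau)-\phi_v(\tau))^2.
\]
For the first term on the right, I would similarly write $\langle L_{k-1}\phi_u,\phi_u\rangle=\sum_{\{\tau,\tau'\}\in E_{k-1}}(\phi_u(\tau)-\phi_u(\tau'))^2$, where each edge of $F_{k-1}(G)$ has the form $\tau=\rho\cup\{v\}$, $\tau'=\rho\cup\{w\}$ with $\rho\in\binom{[n]}{k-2}$, $v,w\notin\rho$, $\{v,w\}\in E$. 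Summing over $u\in[n]$ and reorganizing, I want to recognize the $\sigma=\rho\cup\{v,w\}$ (or $\rho\cup\{v,w,u\}$) structure underlying these contributions.

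The key bookkeeping step is to fix an unordered pair $\{u,v\}\in E$ and a set $\sigma\in\binom{[n]}{k}$ and count in how many ways the term $(\phi(\sigma)-\phi(\sigma\setminus\{v\}\cup\{u\}))^2$-type expression arises on each side — actually it is cleaner to track contributions indexed by a $(k-1)$-set $\tau$ and an edge $\{u,v\}$ disjoint from $\tau$. On the left, the pair $(\tau,\{u,v\})$ contributes $(k-1)(\phi_u(\tau)-\phi_v(\tau))^2$. On the right, the term $(\phi_u(\tau)-\phi_v(\tau))^2$ appears inside $\sum_w\langle L_{k-1}\phi_w,\phi_w\rangle$ precisely when we choose $w\notin\tau\cup\{u,v\}$... hmm, more carefully: $\phi_w(\tau\cup\{u\}\setminus\{x\})$-shaped terms. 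Let me instead fix $\rho\in\binom{[n]}{k-2}$ and an edge $\{u,v\}$ with $u,v\notin\rho$; then for each $w\notin\rho\cup\{u,v\}$, the vector $\phi_w$ contributes to $\langle L_{k-1}\phi_w,\phi_w\rangle$ the term $(\phi_w(\rho\cup\{u\})-\phi_w(\rho\cup\{v\}))^2=(\phi(\rho\cup\{u,w\})-\phi(\rho\cup\{v,w\}))^2$. Setting $\tau=\rho\cup\{w\}$ this is $(\phi_u(\tau)-\phi_v(\tau))^2$ where now $\{u,v\}\in E$ and $u,v\notin\tau$, but $\rho\subset\tau$ can be completed by $w=\tau\setminus\rho$ in exactly... for a fixed valid pair $(\tau,\{u,v\})$ there are $|\tau|=k-1$ choices of the distinguished element $w\in\tau$, hence $k-1$ choices of $\rho=\tau\setminus\{w\}$. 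So the total multiplicity of $(\phi_u(\tau)-\phi_v(\tau))^2$ coming from the "$u,v\notin\tau$" regime of $\sum_w\langle L_{k-1}\phi_w,\phi_w\rangle$ is exactly $k-1$, matching the left side. The remaining contributions to $\sum_w\langle L_{k-1}\phi_w,\phi_w\rangle$ come from edges $\{\tau,\tau'\}\in E_{k-1}$ together with a choice of $w$ lying in $\tau\cap\tau'$, i.e. $w\in\rho=\tau\cap\tau'$; these produce cross terms $(\phi(\sigma)-\phi(\sigma))$-type vanishing when $w\in\{u,v\}$ and must be balanced against $-2\langle D_k\phi,\phi\rangle$.

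**The main obstacle**, and where the $-2\langle D_k\phi,\phi\rangle$ correction enters, is handling the terms in $\sum_u\langle L_{k-1}\phi_u,\phi_u\rangle$ where the summation index $u$ coincides with one of the two vertices of the edge $\{v,w\}\in E$ defining an edge of $F_{k-1}(G)$. When $u=v$, the term $\phi_u(\rho\cup\{v\})=\phi_v(\rho\cup\{v\})=0$ by definition, so the contribution degenerates to $\phi_u(\rho\cup\{w\})^2 = \phi(\rho\cup\{w,u\})^2=\phi(\sigma)^2$ where $\sigma=\rho\cup\{v,w\}$ contains the edge $\{v,w\}$; and symmetrically for $u=w$. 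Summing these "boundary" contributions, each set $\sigma\in\binom{[n]}{k}$ picks up a factor of $2\phi(\sigma)^2$ for every edge $e\in E_\sigma$ (once from $u$ = each endpoint of $e$), giving exactly $2\sum_\sigma |E_\sigma|\phi(\sigma)^2 = 2\langle D_k\phi,\phi\rangle$; this is what gets subtracted. Concretely I would: (1) expand $\sum_u\langle L_{k-1}\phi_u,\phi_u\rangle$ as a triple sum over $u$, over $\rho\in\binom{[n]}{k-2}$, and over edges $\{v,w\}\in E$ with $v,w\notin\rho$; (2) split this sum according to whether $u\notin\rho\cup\{v,w\}$, or $u\in\rho$, or $u\in\{v,w\}$; (3) observe the middle case contributes $0$ since then $\phi_u$ vanishes on both $\rho\cup\{v\}$ and $\rho\cup\{w\}$ (as $u\in\rho\subset$ both); (4) show the first case sums to $(k-1)\langle L_k\phi,\phi\rangle$ by the $w\in\tau$ reindexing above; (5) show the third case sums to $2\langle D_k\phi,\phi\rangle$ by the degeneration just described; and (6) combine. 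The only real care needed is in the reindexing of step (4) — verifying that as $\rho$ ranges over $(k-2)$-subsets and $u$ over the $n-k$ vertices outside $\rho\cup\{v,w\}$, the pair $(\tau,\{v,w\})$ with $\tau=\rho\cup\{u\}$ is hit exactly $k-1$ times (once per element of $\tau$ chosen as "the one outside $\rho$") — and in making sure no edge-of-$F_k$ term is double-counted or missed.
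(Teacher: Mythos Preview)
Your plan is correct and complete. The three-way case split on the position of the summation vertex $u$ relative to $\rho\cup\{v,w\}$ works exactly as you describe: the case $u\in\rho$ vanishes, the case $u\in\{v,w\}$ yields precisely $2\langle D_k\phi,\phi\rangle$, and the case $u\notin\rho\cup\{v,w\}$ reindexes (via $\tau=\rho\cup\{u\}$, each valid $(\tau,\{v,w\})$ hit $|\tau|=k-1$ times) to $(k-1)\langle L_k\phi,\phi\rangle$.

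Your route differs from the paper's in organization. The paper expands each Laplacian quadratic form into its diagonal (degree) part and off-diagonal (cross-term) part separately; the cross-terms on the two sides match after the same $(k-1)$-fold reindexing you use, but matching the diagonal parts requires an auxiliary identity (their Lemma~3.2), namely $\sum_{u\in\sigma} d_{k-1}(\sigma\setminus\{u\}) = (k-1)d_k(\sigma)+2|E_\sigma|$. By keeping the Laplacian forms in their edge-sum shape $\sum(\phi(\cdot)-\phi(\cdot))^2$ throughout and doing the case split on $u$, you bypass that lemma entirely: the $2|E_\sigma|$ correction emerges directly from the degenerate case $u\in\{v,w\}$ rather than from a separate degree computation. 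Your argument is thus slightly more self-contained, at the cost of having to track three cases; the paper's version isolates the purely combinatorial degree identity, which may be of independent interest. Either way the underlying double-counting is the same.

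One cosmetic point: in your exposition you switch midstream between using $u$ for the outer summation index and using $u$ for an endpoint of the edge (with $w$ taking over the other role). In the final write-up, fix the notation once --- say, outer index $u$, edge $\{v,w\}$, base set $\rho$ --- and stick with it.
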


For the proof of Proposition \ref{prop:form2}, we will need the following result about sums of degrees in $F_k(G)$.

\begin{lemma}\label{lemma:degree}
Let $\sigma\in\binom{[n]}{k}$. Then,
\[
\sum_{u\in\sigma} d_{k-1}(\sigma\setminus\{u\})= (k-1)d_k(\sigma)+ 2|E_{\sigma}|.
\]
\end{lemma}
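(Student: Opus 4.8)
The plan is to compute both sides of the identity by a direct counting argument, tracking the edges of the token graphs involved. Fix $\sigma \in \binom{[n]}{k}$. I would first recall that the degree $d_k(\sigma)$ counts the pairs $\{\sigma, \sigma'\}$ with $|\sigma \cap \sigma'| = k-1$ and $\sigma \triangle \sigma' \in E$; equivalently, it counts the pairs $(x, y)$ with $x \in \sigma$, $y \notin \sigma$, and $\{x,y\} \in E$. So $d_k(\sigma) = \sum_{x \in \sigma} |\{y \in [n] \setminus \sigma : \{x,y\} \in E\}|$, which is $\sum_{x\in\sigma} d_G(x)$ minus twice the number of edges of $G$ inside $\sigma$; that is, $d_k(\sigma) = \sum_{x \in \sigma} d_G(x) - 2|E_\sigma|$.

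Next I would apply the same formula one level down. For $u \in \sigma$, the set $\sigma \setminus \{u\}$ is a $(k-1)$-subset, and $E_{\sigma\setminus\{u\}} = \{e \in E : e \subseteq \sigma, u \notin e\}$. Thus $d_{k-1}(\sigma \setminus \{u\}) = \sum_{x \in \sigma \setminus \{u\}} d_G(x) - 2|E_{\sigma \setminus \{u\}}|$. Summing over $u \in \sigma$, the first term becomes $\sum_{u\in\sigma}\sum_{x\in\sigma\setminus\{u\}} d_G(x) = (k-1)\sum_{x\in\sigma} d_G(x)$, since each $x\in\sigma$ is omitted exactly once. For the second term, $\sum_{u\in\sigma}|E_{\sigma\setminus\{u\}}|$ counts pairs $(u, e)$ with $e \in E_\sigma$ and $u \in \sigma\setminus e$; for a fixed $e \in E_\sigma$ there are $k - 2$ choices of such $u$ (since $|e| = 2$ and $|\sigma| = k$), so this sum equals $(k-2)|E_\sigma|$.

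Putting these together,
\[
\sum_{u\in\sigma} d_{k-1}(\sigma\setminus\{u\}) = (k-1)\sum_{x\in\sigma} d_G(x) - 2(k-2)|E_\sigma|.
\]
On the other hand, $(k-1)d_k(\sigma) + 2|E_\sigma| = (k-1)\big(\sum_{x\in\sigma} d_G(x) - 2|E_\sigma|\big) + 2|E_\sigma| = (k-1)\sum_{x\in\sigma} d_G(x) - 2(k-2)|E_\sigma|$, and the two expressions agree. There is no real obstacle here; the only point requiring a little care is the double-counting step for $\sum_{u\in\sigma}|E_{\sigma\setminus\{u\}}|$ (getting the coefficient $k-2$ rather than $k-1$ or $k$), which is why I would spell out the ``pairs $(u,e)$'' bookkeeping explicitly. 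Alternatively, one can avoid passing through $\sum_x d_G(x)$ altogether by directly partitioning the neighbours of $\sigma\setminus\{u\}$ in $F_{k-1}(G)$ according to whether the swapped-in vertex equals $u$ or not, but the degree-sum formulation seems cleanest.
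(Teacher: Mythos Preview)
Your argument is correct. Interestingly, the route the paper actually takes is the one you mention at the end as an alternative: it writes
\[
d_{k-1}(\sigma\setminus\{u\})=\sum_{v\in\sigma\setminus\{u\}}\ \sum_{\substack{w\in([n]\setminus\sigma)\cup\{u\}\\ \{v,w\}\in E}} 1,
\]
and then splits the inner sum according to whether the swapped-in vertex $w$ lies in $[n]\setminus\sigma$ or equals $u$; summing over $u\in\sigma$, the first piece gives $(k-1)d_k(\sigma)$ and the second gives $2|E_\sigma|$ directly. Your primary approach instead passes through the auxiliary identity $d_j(\eta)=\sum_{x\in\eta}d_G(x)-2|E_\eta|$ at both levels and lets the $\sum_{x\in\sigma}d_G(x)$ term cancel. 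Both are equally elementary; the paper's version avoids introducing and then cancelling $\sum_{x\in\sigma} d_G(x)$, while yours has the merit of reducing both sides to the same closed-form expression in $\sum_x d_G(x)$ and $|E_\sigma|$, which makes the equality manifest.
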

\begin{proof}
Note that, for any $0\leq j\leq n$ and any $\eta\in\binom{[n]}{j}$,  $d_j(\eta)=|\{e\in E:\, |e\cap\eta|=1\}|$. Therefore,
\begin{align*}
\sum_{u\in\sigma} d_{k-1}(\sigma\setminus\{u\})&=\sum_{u\in\sigma}\sum_{v\in\sigma\setminus\{u\}} \sum_{\substack{w\in([n]\setminus\sigma)\cup\{u\},\\ \{v,w\}\in E}} 1  =\sum_{u\in\sigma}\sum_{v\in\sigma\setminus\{u\}} \sum_{\substack{w\in[n]\setminus\sigma,\\ \{v,w\}\in E}} 1
+\sum_{u\in\sigma}\sum_{\substack{v\in\sigma\setminus\{u\},\\ \{u,v\}\in E}} 1
\\
&= (k-1)\sum_{v\in\sigma} \sum_{\substack{w\in[n]\setminus\sigma,\\ \{v,w\}\in E}} 1 +\sum_{u\in\sigma}\sum_{\substack{v\in\sigma\setminus\{u\},\\ \{u,v\}\in E}} 1 = (k-1)d_k(\sigma) + 2 |E_{\sigma}|.
\end{align*}
\end{proof}

\begin{proof}[Proof of Proposition \ref{prop:form2}]
We have
 \begin{align}\label{eq:form1}
\angles{L_k\phi}{\phi} &=\sum_{\{\sigma,\tau\} \in E_k}(\phi(\sigma)-\phi(\tau))^2
=\sum_{\sigma\in\binom{[n]}{k}} d_{k}(\sigma) \phi(\sigma)^2 -2\sum_{\{\sigma,\tau\} \in E_k}\phi(\sigma)\phi(\tau) \nonumber
\\
&=\sum_{\sigma\in\binom{[n]}{k}}d_{k}(\sigma) \phi(\sigma)^2 -2\sum_{\eta\in\binom{[n]}{k-1}}\sum_{\substack{\{v,w\}\in E,\\ v,w\notin\eta
    }} \phi(\eta\cup\{v\})\phi(\eta\cup\{w\}).
 \end{align}   
 Similarly,
\begin{align}\label{eq:form2}
    \sum_{u=1}^n &\angles{L_{k-1}\phi_{u}}{\phi_{u}}
    = \sum_{u=1}^n \sum_{\tau\in\binom{[n]}{k-1}}  d_{k-1}(\tau) \phi_{u}(\tau)^2-2 \sum_{u=1}^n \sum_{\eta\in\binom{[n]}{k-2}} \sum_{\substack{\{v,w\}\in E,\\ v,w\notin\eta}}\phi_u(\eta\cup\{v\})\phi_u(\eta\cup\{w\}) \nonumber
    \\
    &= \sum_{\tau\in\binom{[n]}{k-1}}\sum_{u\in[n]\setminus\tau}  d_{k-1}(\tau) \phi(\tau\cup\{u\})^2-2 \sum_{\eta\in\binom{[n]}{k-2}}\sum_{u\in[n]\setminus \eta}\sum_{\substack{\{v,w\}\in E,\\ v,w\notin\eta\cup\{u\}}} \phi(\eta\cup\{u,v\})\phi(\eta\cup\{u,w\}) \nonumber
\\
    &= \sum_{\sigma\in\binom{[n]}{k}} \left(\sum_{u\in\sigma} d_{k-1}(\sigma\setminus\{u\})\right) \phi(\sigma)^2-2 (k-1)\sum_{\tau\in\binom{[n]}{k-1}} \sum_{\substack{\{v,w\}\in E,\\v,w\notin\tau}} \phi(\tau\cup\{v\})\phi(\tau\cup\{w\}).
\end{align}
By Lemma \ref{lemma:degree}    we have, for all $\sigma\in\binom{[n]}{k}$,  
\[
\sum_{u\in\sigma}d_{k-1}(\sigma\setminus\{u\})= (k-1)d_k(\sigma)+ 2|E_{\sigma}|.
\]
Therefore, by \eqref{eq:form1} and \eqref{eq:form2}, we obtain
\[
(k-1)\angles{L_k \phi}{\phi} = \sum_{u=1}^n \angles{L_{k-1}\phi_{u}}{\phi_{u}} - 2 \sum_{\sigma\in\binom{[n]}{k}} |E_{\sigma}|\phi(\sigma)^2
=  \sum_{u=1}^n \angles{L_{k-1}\phi_{u}}{\phi_{u}}- 2\angles{D_k\phi}{\phi}.
\]
\end{proof}

We will also need the following lemma.

\begin{lemma}
\label{lemma:norm}
Let $\phi\in \Rea^{\binom{n}{k}}$. Then
\[
    \sum_{u=1}^n \|\phi_{u}\|^2 = k\|\phi\|^2.
\]
\end{lemma}
\begin{proof}
\[
     \sum_{u=1}^n \|\phi_{u}\|^2 = \sum_{u=1}^n \sum_{\tau\in\binom{[n]}{k-1}} 
     \phi_{u}(\tau)^2 =
     \sum_{\tau\in\binom{[n]}{k-1}}  \sum_{u\in[n]\setminus\tau}
     \phi(\tau\cup\{u\})^2
     =\sum_{\sigma\in\binom{[n]}{k}}  \sum_{u\in\sigma}
     \phi(\sigma)^2
     =k\sum_{\sigma\in\binom{[n]}{k}} 
     \phi(\sigma)^2=k\|\phi\|^2.
\]
\end{proof}

We can now prove Theorem \ref{thm:step}:

\begin{proof}[Proof of Theorem \ref{thm:step}]

Let $\lambda=\eigen{max}{k}{G}$, and let $\phi\in\Rea^{\binom{n}{k}}$ be an eigenvector of $L_k$ with eigenvalue $\lambda$.
By Proposition \ref{prop:form2}, we have
\[
    (k-1)\lambda \|\phi\|^2 = (k-1)\angles{L_k \phi}{\phi} =  \sum_{u=1}^n \angles{L_{k-1}\phi_{u}}{\phi_{u}}- 2\angles{D_k\phi}{\phi}.
\]

Since $|E_{\sigma}|\geq 0$ for all $\sigma\in\binom{[n]}{k}$, we have $\angles{D_k\phi}{\phi}\geq 0$, and therefore 
\[
(k-1)\lambda \|\phi\|^2\leq 
 \sum_{u=1}^n \angles{L_{k-1}\phi_{u}}{\phi_{u}}
\leq \sum_{u=1}^n \eigen{max}{k-1}{G} \|\phi_{u}\|^2 = k \eigen{max}{k-1}{G} \|\phi\|^2,
\]
where the last equality follows from Lemma \ref{lemma:norm}.
Hence, we obtain $\lambda\leq k \eigen{max}{k-1}{G}/(k-1)$, as wanted.

Now, let $\lambda=\eigen{min}{k}{G}$. By Lemma \ref{lemma:non_trivial}, since $\lambda$ is non-trivial, there is an eigenvector $\phi$ of $L_k$ with eigenvalue $\lambda$ such that $B^T_{n,k,k-1}\phi=0$. 
We will show that, for any $u\in[n]$, $B^{T}_{n,k-1,k-2}\phi_u=0$. Let $u\in[n]$ and $\eta\in \binom{[n]}{k-2}$. If $u\in\eta$, we have
\[
    B^{T}_{n,k-1,k-2}\phi_u(\eta)= \sum_{\substack{\tau\in\binom{[n]}{k-1},\\ \eta\subset\tau}} \phi_u(\tau) = 0,
\]
by the definition of $\phi_u$. If $u\notin\eta$, then
\[
    B^{T}_{n,k-1,k-2}\phi_u(\eta)= \sum_{\substack{\tau\in\binom{[n]}{k-1},\\ \eta\subset\tau}} \phi_u(\tau) = 
  \sum_{\substack{\tau\in\binom{[n]}{k-1},\\ \eta\subset\tau,\, u\notin\tau}} \phi(\tau\cup\{u\}) =\sum_{\substack{\sigma\in\binom{[n]}{k},\\ \eta\cup\{u\}\subset\sigma}} \phi(\sigma)= B^T_{n,k,k-1}\phi(\eta\cup\{u\})=0.
\]
Therefore, by Lemma \ref{lemma:lambda_min}, $\angles{L_{k-1}\phi_{u}}{\phi_{u}}\geq \eigen{min}{k-1}{G}\|\phi_u\|^2$ for all $u\in[n]$. Moreover, since $|E_{\sigma}|\leq \binom{k}{2}$ for all $\sigma\in\binom{[n]}{k}$, we have $\angles{D_k\phi}{\phi}\leq \binom{k}{2}\|\phi\|^2$, and thus, by Proposition \ref{prop:form2}, we obtain
\begin{align*}
(k-1)\lambda\|\phi\|^2 &=(k-1)\angles{L_k\phi}{\phi}= 
\sum_{u=1}^n \angles{L_{k-1}\phi_{u}}{\phi_{u}}- 2\angles{D_k\phi}{\phi}
\\
&\geq \sum_{u=1}^n\eigen{min}{k-1}{G}\|\phi_{u}\|^2 - k(k-1)\|\phi\|^2
 = \left( k\eigen{min}{k-1}{G}-k(k-1)\right) \|\phi\|^2,
\end{align*}
where the last equality follows from Lemma \ref{lemma:norm}. Hence, $\lambda \geq k\eigen{min}{k-1}{G}/(k-1)-k
$.    
\end{proof}

Finally, we obtain Theorem \ref{thm:main_intro} by an inductive application of Theorem \ref{thm:step}.

\begin{proof}[Proof of Theorem \ref{thm:main_intro}]
We will show that \[\eigen{max}{k}{G}\leq k \lambda_n(L)\] and \[\eigen{min}{k}{G} \geq k(\lambda_2(L)-k+1).\]
We argue by induction on $k$.
For $k=1$ the claim is trivial. Let $k\geq 2$, and assume that
\[
  \eigen{max}{k-1}{G} \leq (k-1)\lambda_n(L)
\]
and
\[
\eigen{min}{k-1}{G}\geq (k-1)(\lambda_2(L)-(k-1)+1).
\]
Then, by Theorem \ref{thm:step}, we obtain
\[
\eigen{max}{k}{G} \leq \frac{k}{k-1}\eigen{max}{k-1}{G} \leq k\lambda_n(L),
\]
and
\[
\eigen{min}{k}{G} \geq \frac{k}{k-1}\eigen{min}{k-1}{G}-k \geq k(\lambda_2(L)-k+1). 
\]
\end{proof}

\section{Extremal examples}\label{sec:examples}

The next result shows that the upper and lower bounds in Theorem \ref{thm:main_intro} are sharp.

\begin{proposition}\label{prop:examples}
Let $m\geq k$, and let $n$ be divisible by $m$. 
Let $G$ be the union of $m$ disjoint cliques, each of size $n/m$. Then, the maximal eigenvalue of $L_k(G)$ is exactly $k n/m=k\lambda_{n}(L(G))$.

Let $\bar{G}$ be the complement graph of $G$, namely the complete balanced $m$-partite graph with sides of size $n/m$. Then, the minimal non-trivial eigenvalue of $L_k(\bar{G})$ is exactly $k((m-1)n/m-k+1)=k(\lambda_2(L(\bar{G}))-k+1)$.
\end{proposition}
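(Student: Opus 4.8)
The plan is to combine Theorem~\ref{thm:main_intro} with two explicit eigenvector constructions. Theorem~\ref{thm:main_intro} already provides $\eigen{max}{k}{G}\le k\lambda_n(L(G))$ and $\eigen{min}{k}{\bar G}\ge k\big(\lambda_2(L(\bar G))-k+1\big)$, so it remains to compute the relevant extreme Laplacian eigenvalues of $G$ and $\bar G$ and to exhibit eigenvectors attaining equality. Write $V=V_1\cupdot\cdots\cupdot V_m$ for the partition of $V$ into the $m$ cliques of $G$ (equivalently, the $m$ parts of $\bar G$) and set $s=n/m$; we take $s\ge2$, since for $s=1$ the graph $G$ is edgeless and the statement is degenerate. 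As $G$ is a disjoint union of $m$ copies of $K_s$, the spectrum of $L(G)$ is $\{0,s\}$ with $0$ of multiplicity $m$, so $\lambda_n(L(G))=s=n/m$; and from $L(G)+L(\bar G)=nI-J$ one reads off that on $\mathbf 1^{\perp}$ the spectrum of $L(\bar G)$ is $\{n-s,n\}$, so $\lambda_2(L(\bar G))=n-s=(m-1)n/m$.

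For the eigenvectors I would use a single construction that serves both graphs. For each $i\in[m]$ fix $f_i\colon V_i\to\Rea$ with $\sum_{v\in V_i}f_i(v)=0$ and $f_i\not\equiv0$ (for instance values $+1,-1$ at two vertices of $V_i$ and $0$ elsewhere, which exists since $s\ge2$). Call $\sigma\in\binom{V}{k}$ a \emph{partial transversal} if $|\sigma\cap V_i|\le1$ for every $i$; since $m\ge k$ there are such sets of size $k$. Define $\phi\in\Rea^{\binom{n}{k}}$ by $\phi(\sigma)=\prod_{i:\,\sigma\cap V_i\ne\emptyset}f_i(\sigma\cap V_i)$ when $\sigma$ is a partial transversal and $\phi(\sigma)=0$ otherwise; the chosen $f_i$ make $\phi\ne0$. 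The claim is that $\phi$ is an eigenvector of $L_k(G)$ with eigenvalue $ks=k\lambda_n(L(G))$ and an eigenvector of $L_k(\bar G)$ with eigenvalue $k(n-s-k+1)=k(\lambda_2(L(\bar G))-k+1)$, and, in the case of $\bar G$, that $B^{T}_{n,k,k-1}\phi=0$.

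Proving the claim is the main step, via a direct evaluation of $(L_k\phi)(\sigma)=d_k(\sigma)\phi(\sigma)-\sum_{\{\sigma,\tau\}\in E_k}\phi(\tau)$, split according to whether $\sigma$ is a partial transversal. If $\sigma$ is one, then for $G$ its neighbours are obtained by sliding one token within its clique, so they are partial transversals on the same parts; here $d_k(\sigma)=k(s-1)$ and $\sum_{\tau}\phi(\tau)$ telescopes to $-k\phi(\sigma)$ using $\sum_{w\in V_p\setminus\{v\}}f_p(w)=-f_p(v)$, giving eigenvalue $k(s-1)+k=ks$. For $\bar G$ a neighbour either puts a second token in an already-occupied part (so $\phi$ vanishes there) or relocates a token to an empty part; since a token may move to any of the $n-k$ free vertices outside its own part, $d_k(\sigma)=k\bigl(n-k-(s-1)\bigr)=k(n-s-k+1)$, and the neighbour-sum vanishes because, for each moved token, the contributions grouped by target part $V_j$ factor through $\sum_{v\in V_j}f_j(v)=0$. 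If $\sigma$ is not a partial transversal then $\phi(\sigma)=0$ and the neighbour-sum is again $0$: automatically for $G$, since moves preserve every $|\sigma\cap V_i|$ and hence keep every neighbour a non-transversal; and for $\bar G$, because the only neighbours where $\phi\ne0$ arise when $\sigma$ has exactly two tokens in a single part, and summing the two ways of evacuating that part over all empty target parts again produces the vanishing factor $\sum_{v\in V_j}f_j(v)=0$. The same observation gives $B^{T}_{n,k,k-1}\phi=0$ for $\bar G$: for $\eta\in\binom{V}{k-1}$, $B^{T}_{n,k,k-1}\phi(\eta)=\sum_{\sigma\supset\eta}\phi(\sigma)$ is either a sum over non-transversals (hence $0$) or a sum over one-point extensions of $\eta$ into its empty parts, which factors through $\sum_{v\in V_j}f_j(v)=0$.

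Granting the claim, the proposition is immediate: $\phi$ gives $\eigen{max}{k}{G}\ge kn/m$, which with the upper bound of Theorem~\ref{thm:main_intro} yields $\eigen{max}{k}{G}=kn/m$; and by Lemma~\ref{lemma:non_trivial} the eigenvalue $k(n-s-k+1)=k((m-1)n/m-k+1)$ of $L_k(\bar G)$ is non-trivial, so $\eigen{min}{k}{\bar G}\le k((m-1)n/m-k+1)$, which with the lower bound of Theorem~\ref{thm:main_intro} gives equality. The main obstacle I foresee is just the bookkeeping in this verification --- correctly enumerating the neighbours of transversal and near-transversal $\sigma$ in $F_k(\bar G)$ and computing their degrees --- all of which is driven by the single fact that each $f_i$ sums to zero over its part.
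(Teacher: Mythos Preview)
Your argument is correct and takes a genuinely different route from the paper's. The paper does not build an explicit eigenvector; instead it observes that $F_k(G)$ splits into connected components of the form $F_{k_1}(K_s)\square\cdots\square F_{k_m}(K_s)$ over tuples $(k_1,\ldots,k_m)$ with $\sum_i k_i=k$, and then applies Fiedler's formula for the Laplacian spectrum of a Cartesian product (Lemma~\ref{lemma:cartesian}) together with the Johnson-graph spectrum (Lemma~\ref{lemma:johnson}) to produce the eigenvalue $kn/m$ by taking each $k_i\in\{0,1\}$, which is possible precisely because $m\ge k$. The passage to $\bar G$ is then handled via the complement relation of Lemma~\ref{lemma:complement}, which pairs non-trivial eigenvalues of $L_k(G)$ and $L_k(\bar G)$ by $\lambda+\mu=k(n-k+1)$. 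Your approach is more self-contained: it bypasses Fiedler's lemma, the Johnson spectrum, and the complement lemma entirely, and the single zero-sum condition on each $f_i$ simultaneously delivers the eigenvector equations for both $L_k(G)$ and $L_k(\bar G)$ and the condition $B_{n,k,k-1}^T\phi=0$. The paper's approach, on the other hand, gives the full spectrum of $L_k(G)$ as a by-product, not just the maximal eigenvalue. One minor point: rather than dismissing $s=1$ as degenerate, you can simply note that then $G$ is edgeless so $\lambda_n(L(G))=0=\eigen{max}{k}{G}$, while $\bar G=K_n$ and the claim for $\bar G$ is exactly the content of Lemma~\ref{lemma:johnson}.
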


Proposition \ref{prop:examples} follows from an argument similar to the one in  \cite[Theorem 7.2(iv)]{dalfo2021laplacian}. For completeness, we include a proof.
We will need the following result due to Fiedler. Recall that, given graphs $G=(V,E)$ and $G'=(V',E')$, the Cartesian product $G\square G'$ is the graph on vertex set $V\times V'$ with edges of the form $\{(u,u'),(v,v')\}$, where either $u=v$ and $u'$ is adjacent to $v'$ in $G'$, or $u'=v'$ and $u$ is adjacent to $v$ in $G$.

\begin{lemma}[Fiedler {\cite[3.4]{fiedler1973algebraic}}]\label{lemma:cartesian}
For $1\leq i\leq m$, let $H_i$ be a graph on $n_i$ vertices, and let $\lambda_1^i\leq \cdots \leq \lambda_{n_i}^i$ be its Laplacian eigenvalues. Then, the Laplacian eigenvalues of the Cartesian product $H_1\square \cdots \square H_m$ are
\[
  \left\{  \sum_{i=1}^m \lambda_{t_i}^{i} :\, 1\leq t_i\leq n_i \,\,\forall i\in[m] \right\}.
\]
\end{lemma}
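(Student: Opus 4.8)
The plan is to identify the Laplacian of a Cartesian product with a Kronecker sum of the factor Laplacians, and then read off the spectrum from tensor products of eigenvectors. First I would treat the case $m=2$, with two graphs $G=(V,E)$ and $G'=(V',E')$. Indexing the vertices of $G\square G'$ as pairs $(u,u')$ and ordering them so that matrices act on $\Rea^{V}\otimes \Rea^{V'}$, I would verify directly from the definition of the Cartesian product that the adjacency matrix splits as $A(G\square G')=A(G)\otimes I_{|V'|} + I_{|V|}\otimes A(G')$, since a neighbor of $(u,u')$ either keeps $u$ fixed and moves along an edge of $G'$, or keeps $u'$ fixed and moves along an edge of $G$. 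Because the degree of $(u,u')$ is $d_G(u)+d_{G'}(u')$, the degree matrix likewise splits as $D(G\square G')=D(G)\otimes I_{|V'|} + I_{|V|}\otimes D(G')$. Subtracting gives the key identity
\[
L(G\square G')=L(G)\otimes I_{|V'|} + I_{|V|}\otimes L(G').
\]

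Next I would diagonalize this Kronecker sum. Let $x_1,\ldots,x_{|V|}$ be an orthonormal eigenbasis of $L(G)$ with eigenvalues $\mu_1,\ldots,\mu_{|V|}$, and $y_1,\ldots,y_{|V'|}$ one for $L(G')$ with eigenvalues $\nu_1,\ldots,\nu_{|V'|}$; these exist since Laplacians are symmetric. A direct computation shows
\[
\left(L(G)\otimes I_{|V'|} + I_{|V|}\otimes L(G')\right)(x_i\otimes y_j)=(\mu_i+\nu_j)\,(x_i\otimes y_j),
\]
so each $x_i\otimes y_j$ is an eigenvector with eigenvalue $\mu_i+\nu_j$. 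The vectors $\{x_i\otimes y_j\}$ are orthonormal and number $|V|\cdot |V'|=\dim(\Rea^V\otimes\Rea^{V'})$, hence form a complete eigenbasis; therefore the full multiset of Laplacian eigenvalues of $G\square G'$ is exactly $\{\mu_i+\nu_j\}$.

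Finally I would pass to the $m$-fold product by induction on $m$, using associativity of the Cartesian product, $H_1\square\cdots\square H_m\cong (H_1\square\cdots\square H_{m-1})\square H_m$. Applying the two-factor case together with the inductive hypothesis that the eigenvalues of $H_1\square\cdots\square H_{m-1}$ are the sums $\sum_{i=1}^{m-1}\lambda_{t_i}^i$, I obtain that the eigenvalues of the full product are the sums $\sum_{i=1}^{m}\lambda_{t_i}^i$ ranging over all choices $1\leq t_i\leq n_i$, as claimed. The argument is essentially routine; the only point requiring care is the bookkeeping that tensor products of the eigenbases remain a complete orthonormal system at each stage, which is guaranteed by the dimension count $\prod_{i=1}^m n_i$.
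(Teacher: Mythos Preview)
Your argument is correct and is the standard proof of Fiedler's result via the Kronecker-sum decomposition of the Laplacian. Note, however, that the paper does not actually prove this lemma: it is quoted as a known result with a citation to Fiedler~\cite[3.4]{fiedler1973algebraic}, so there is no ``paper's own proof'' to compare against. Your write-up would serve perfectly well as a self-contained justification should one be desired.
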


We will also need the following result, describing the Laplacian spectra of token graphs of a complete graph (also known as Johnson graphs).

\begin{lemma}[See {\cite[Thm. 6.3.2, Thm. 6.3.3]{godsil2016erdos}}, {\cite[Eq. 19]{dalfo2021laplacian}}]
\label{lemma:johnson}
    Let $K_n$ be the complete graph on $n$ vertices. Then, the eigenvalues of $L_k(K_n)$ are
\[
    \{j(n-j+1) :\, 0\leq j\leq k\}.
\]
Moreover, for every $0\leq j\leq k$, the eigenspace corresponding to the eigenvalue $j(n-j+1)$ is the orthogonal complement of $\text{Im}( B_{n,k,j-1})$ in $\text{Im}(B_{n,k,j})$, and therefore 
the eigenvalue $j(n-j+1)$ has multiplicity $\binom{n}{j}-\binom{n}{j-1}$. 
In particular, the eigenvectors corresponding to the eigenvalue $k(n-k+1)$ are exactly the vectors $\phi\in\Rea^{\binom{n}{k}}$ satisfying $B^{T}_{n,k,k-1}\phi=0$.
\end{lemma}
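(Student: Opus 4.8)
The plan is to prove all assertions of Lemma~\ref{lemma:johnson} simultaneously by induction on $k$, exploiting the orthogonal, $L_k$-invariant splitting $\Rea^{\binom nk}=\text{Im}(B_{n,k,k-1})\oplus\text{Ker}(B^T_{n,k,k-1})$ recorded in Section~\ref{sec:prelims}. On the first summand $L_k$ is transported from $L_{k-1}$ by $B_{n,k,k-1}$ with the same eigenvalues and the same multiplicities, so the inductive hypothesis immediately supplies the eigenvalues $\{j(n-j+1):0\le j\le k-1\}$ there. The entire new content of the lemma is thus concentrated on the second summand: I must show that every $\phi$ with $B^T_{n,k,k-1}\phi=0$ is an eigenvector of eigenvalue $k(n-k+1)$. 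The base case $k=1$ is $L_1(K_n)=nI-J$, whose eigenvalues are $0$ and $n=1\cdot(n-1+1)$, with $\text{Ker}(B^T_{n,1,0})$ (the sum-zero vectors) being the $n$-eigenspace, as required.

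For the crucial step I specialize the Garland identity of Proposition~\ref{prop:form2} to $G=K_n$. Since every pair inside a $k$-set is an edge, $|E_\sigma|=\binom k2$ for all $\sigma$, so $D_k=\binom k2 I$ by \eqref{eq:dk} and $\angles{D_k\phi}{\phi}=\binom k2\|\phi\|^2$. The computation inside the proof of Theorem~\ref{thm:step} shows that $B^T_{n,k,k-1}\phi=0$ forces $B^T_{n,k-1,k-2}\phi_u=0$ for every $u$; by the ``in particular'' clause of the inductive hypothesis at level $k-1$, each $\phi_u$ is then an eigenvector of $L_{k-1}(K_n)$ with eigenvalue $(k-1)(n-k+2)$. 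Feeding this, together with Lemma~\ref{lemma:norm}, into Proposition~\ref{prop:form2} gives $(k-1)\angles{L_k\phi}{\phi}=(k-1)(n-k+2)\cdot k\|\phi\|^2-k(k-1)\|\phi\|^2=(k-1)k(n-k+1)\|\phi\|^2$, that is, the Rayleigh quotient of $L_k$ equals $k(n-k+1)$ on all of $\text{Ker}(B^T_{n,k,k-1})$. Since this subspace is $L_k$-invariant and $L_k$ is symmetric, a one-line polarization argument upgrades the constant Rayleigh quotient to the operator identity $L_k\phi=k(n-k+1)\phi$ on the whole subspace. I expect this to be the main obstacle, since it is exactly where the geometry of the complete graph enters ($D_k$ becomes scalar) and where the inductive hypothesis must be invoked in the precise ``top eigenspace'' form.

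With both summands understood, the eigenvalues of $L_k(K_n)$ are $\{j(n-j+1):0\le j\le k\}$; these are pairwise distinct because $f(j)=j(n-j+1)$ is strictly increasing on $[0,\lfloor n/2\rfloor]$. Tracking multiplicities through the same splitting then yields multiplicity $\binom nj-\binom n{j-1}$ for each $j$: the image part reproduces the multiplicities of $L_{k-1}$ for $j\le k-1$ (induction), and the kernel part contributes $\dim\text{Ker}(B^T_{n,k,k-1})=\binom nk-\binom n{k-1}$ for $j=k$.

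Finally, to identify the eigenspaces I pass to the nested chain $U_j:=\text{Im}(B_{n,k,j})$. The elementary counting identity $B_{n,a,b}B_{n,b,c}=\binom{a-c}{b-c}B_{n,a,c}$ gives $U_0\subseteq\cdots\subseteq U_k$, and iterating \eqref{eq:BLLB} along this factorization yields the intertwining $L_kB_{n,k,j}=B_{n,k,j}L_j$, so each $U_j$ is $L_k$-invariant and $L_k|_{U_j}$ is conjugate to $L_j$. Consequently $W_j:=U_j\cap U_{j-1}^{\perp}$ is $L_k$-invariant, and the inductive description of $L_j$ shows that the eigenvalue $j(n-j+1)$ occurs in $U_j$ with multiplicity $\binom nj-\binom n{j-1}$ but does not occur in $U_{j-1}$. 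Matching this against the global multiplicity computed above forces the full $j(n-j+1)$-eigenspace to sit inside $U_j$, to meet $U_{j-1}$ trivially, and hence, by dimension, to equal $W_j=\text{Im}(B_{n,k,j})\cap\text{Im}(B_{n,k,j-1})^{\perp}$. The ``in particular'' statement is the case $j=k$, where $W_k=\text{Ker}(B^T_{n,k,k-1})$, already handled in the crucial step.
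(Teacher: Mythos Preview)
The paper does not supply its own proof of this lemma; it is quoted from the cited references. Your argument is correct and, notably, gives a self-contained proof using only the machinery developed in the paper itself---the intertwining relation \eqref{eq:BLLB}, Proposition~\ref{prop:form2}, and Lemma~\ref{lemma:norm}---rather than the association-scheme or representation-theoretic arguments behind the standard references. The decisive observation is that for $G=K_n$ the diagonal matrix $D_k$ of \eqref{eq:dk} is the scalar $\binom{k}{2}I$, so the inequalities in the proof of Theorem~\ref{thm:step} collapse to equalities and Proposition~\ref{prop:form2} computes the Rayleigh quotient on $\text{Ker}(B^T_{n,k,k-1})$ exactly; invariance of this subspace and symmetry of $L_k$ then force $L_k$ to act as $k(n-k+1)I$ there, closing the induction on the ``in particular'' clause. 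Your identification of the $j$-th eigenspace with $U_j\cap U_{j-1}^{\perp}$ via the chain $U_j=\text{Im}(B_{n,k,j})$, the factorization $B_{n,a,b}B_{n,b,c}=\binom{a-c}{b-c}B_{n,a,c}$, and a dimension count is also correct; it tacitly uses that $B_{n,k,j}$ has full column rank $\binom{n}{j}$, i.e.\ that $j\le k\le\lfloor n/2\rfloor$, which is exactly the range in which the stated multiplicities are positive and the lemma should be read.
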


Finally, we will need the following result relating the spectrum of $L_k(G)$ and that of $L_k(\bar{G})$, which follows from the proof of \cite[Theorem 6.2]{dalfo2021laplacian}:

\begin{lemma}\label{lemma:complement}
Let $G$ be a graph on $n$ vertices, and let $\lambda$ be a non-trivial eigenvalue of $L_k(G)$. Then $\lambda=k(n-k+1)-\mu$, where $\mu$ is some non-trivial eigenvalue of $L_k(\bar{G})$.
\end{lemma}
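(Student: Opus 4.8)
The plan is to prove Lemma \ref{lemma:complement} by relating the Laplacians $L_k(G)$ and $L_k(\bar G)$ directly, using the fact that $F_k(K_n)$ is the edge-disjoint union of $F_k(G)$ and $F_k(\bar G)$ on the common vertex set $\binom{[n]}{k}$. Indeed, an edge $\{\sigma,\tau\}$ of $F_k(K_n)$ (i.e.\ a pair with $|\sigma\cap\tau|=k-1$) lies in $F_k(G)$ precisely when $\sigma\triangle\tau\in E(G)$, and in $F_k(\bar G)$ precisely when $\sigma\triangle\tau\in E(\bar G)$, and exactly one of these holds. Hence
\[
    L_k(K_n)=L_k(G)+L_k(\bar G).
\]
First I would record this identity, which is the combinatorial heart of the argument.

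Next I would invoke Lemma \ref{lemma:johnson}: by Equation \eqref{eq:BLLB} (applied iteratively) the subspace $\mathrm{Im}(B_{n,k,k-1})$ is $L_k(H)$-invariant for every graph $H$, so in particular its orthogonal complement $W:=\mathrm{Ker}(B^T_{n,k,k-1})$ is invariant under $L_k(G)$, $L_k(\bar G)$ and $L_k(K_n)$. By Lemma \ref{lemma:johnson}, $L_k(K_n)$ acts on $W$ as the scalar $k(n-k+1)\cdot\id$. Therefore, restricting the operator identity $L_k(K_n)=L_k(G)+L_k(\bar G)$ to $W$, we get
\[
    L_k(G)|_W + L_k(\bar G)|_W = k(n-k+1)\,\id_W.
\]
Thus $L_k(G)|_W$ and $L_k(\bar G)|_W$ are simultaneously diagonalizable, and their eigenvalues (with multiplicities) pair up as $\lambda\leftrightarrow k(n-k+1)-\lambda$.

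It then remains to connect this to non-trivial eigenvalues. By Lemma \ref{lemma:non_trivial}, an eigenvalue $\lambda$ of $L_k(G)$ is non-trivial if and only if $L_k(G)$ has an eigenvector with eigenvalue $\lambda$ lying in $W$; equivalently, the multiplicity of $\lambda$ in the restriction $L_k(G)|_W$ equals the excess of the multiplicity of $\lambda$ in $L_k(G)$ over that in $L_{k-1}(G)$. So: given a non-trivial eigenvalue $\lambda$ of $L_k(G)$, pick an eigenvector $\phi\in W$ with $L_k(G)\phi=\lambda\phi$; then $L_k(\bar G)\phi=(k(n-k+1)-\lambda)\phi$ with $\phi\in W$, so $\mu:=k(n-k+1)-\lambda$ is an eigenvalue of $L_k(\bar G)$ with an eigenvector in $W=\mathrm{Ker}(B^T_{n,k,k-1})$, hence non-trivial again by Lemma \ref{lemma:non_trivial}. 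This gives exactly the claimed statement.

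The main obstacle, and the only genuinely non-formal point, is justifying that $W$ is invariant under all three Laplacians and that $L_k(K_n)$ is scalar on $W$ — but both are immediate from Lemma \ref{lemma:johnson} (the final sentence there identifies the $k(n-k+1)$-eigenspace of $L_k(K_n)$ as exactly $\mathrm{Ker}(B^T_{n,k,k-1})$, and its stated dimension $\binom{n}{k}-\binom{n}{k-1}$ equals $\dim W$, so $L_k(K_n)$ has no other eigenvalues on $W$). Everything else is the decomposition $L_k(K_n)=L_k(G)+L_k(\bar G)$ plus Lemma \ref{lemma:non_trivial}, so I expect the write-up to be short. One should be mildly careful that the pairing respects multiplicities, which it does because $L_k(G)|_W$ and $L_k(\bar G)|_W=k(n-k+1)\id_W-L_k(G)|_W$ share the same eigenspaces.
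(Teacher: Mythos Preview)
Your proof is correct and follows essentially the same route as the paper's: both use the identity $L_k(K_n)=L_k(G)+L_k(\bar G)$ together with the fact (Lemma~\ref{lemma:johnson}) that $L_k(K_n)$ acts as the scalar $k(n-k+1)$ on $W=\mathrm{Ker}(B_{n,k,k-1}^T)$, then invoke Lemma~\ref{lemma:non_trivial} on both ends. The only difference is cosmetic: the paper outsources the decomposition and the simultaneous-eigenvector claim to \cite[Theorem~6.2]{dalfo2021laplacian}, whereas you spell them out directly.
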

\begin{proof}
    Let $\phi$ be an eigenvector of $L_k(G)$ with eigenvalue $\lambda$. By Lemma \ref{lemma:non_trivial}, we can assume $B_{n,k,k-1}^T\phi=0$. 
    It was shown in \cite[Theorem 6.2]{dalfo2021laplacian} that $\phi$ is also an eigenvector of $L_k(\bar{G})$ with eigenvalue $\mu$, and an eigenvector of $L_k(K_n)$ with eigenvalue $\xi$, for some $\mu$ and $\xi$ such that $\lambda+\mu=\xi$. Since $B_{n,k,k-1}^T \phi=0$, by Lemma \ref{lemma:johnson} we have $\xi=k(n-k+1)$. We obtain $\lambda=k(n-k+1)-\mu$. Furthermore, since $\phi$ is an eigenvector of $L_k(\bar{G})$ with eigenvalue $\mu$ satisfying $B_{n,k,k-1}^T \phi=0$, by Lemma \ref{lemma:non_trivial} $\mu$ is a non-trivial eigenvalue of $L_k(\bar{G})$.
\end{proof}

\begin{proof}[Proof of Proposition \ref{prop:examples}]

Let $G_1,\ldots,G_m$ be the connected components of $G$, each isomorphic to the complete graph on $n/m$ vertices.
Let \[\mathcal{I}=\left\{(k_1,\ldots,k_m):\, 0\leq k_i\leq n/m,\, \sum_{i=1}^m k_i=k\right\}.\] Then, for each $(k_1,\ldots,k_m)\in\mathcal{I}$, $F_k(G)$ has a connected component isomorphic to $F_{k_1}(G_1)\square \cdots\square F_{k_m}(G_m)$ (see proof of Corollary 6.4 in \cite{dalfo2021laplacian}). By Lemma \ref{lemma:cartesian} and Lemma \ref{lemma:johnson}, every eigenvalue of $L_k(G)$ is of the form
\[
  \sum_{i=1}^m j_i(n/m-j_i+1)=\left(\sum_{i=1}^m j_i\right) n/m-\sum_{i=1}^m j_i(j_i-1),
\]
for $(k_1,\ldots,k_m)\in\mathcal{I}$ and $0\leq j_i\leq k_i$ for all $1\leq i\leq m$.

Since $m\geq k$, we can choose
each $k_i$ to be either $0$ or $1$, and $j_i=k_i$ for all $i$, to obtain an eigenvalue 
\begin{equation}\label{eq:lambdamaxexample}
\lambda=\left(\sum_{i=1}^m k_i\right) n/m-\sum_{i=1}^m k_i(k_i-1)= kn/m= k \lambda_n(L(G)).
\end{equation}
By Theorem \ref{thm:main_intro}, this is the maximal eigenvalue of $L_k(G)$. Furthermore, note that, by Theorem \ref{thm:main_intro}, this is a non-trivial eigenvalue of $L_k(G)$ (otherwise, it is an eigenvalue of $L_{k-1}(G)$ larger than $(k-1)\lambda_n(L(G))$, a contradiction).

Now, let $\bar{G}$ be the complement of $G$. Then $\lambda_2(L(\bar{G}))=n-\lambda_n(L(G))= (m-1)n/m$. By Lemma \ref{lemma:complement}, $\lambda=k(n-k+1)-\mu$, where $\mu$ is some non-trivial eigenvalue of $L_k(G)$. By \eqref{eq:lambdamaxexample}, we have
\begin{align*}
    \mu&= k(n-k+1)-\lambda= k(n-k+1)-kn/m
    \\
    &=k((m-1)n/m-k+1)= k(\lambda_2(L(\bar{G}))-k+1).
\end{align*}
By Theorem \ref{thm:main_intro}, this is the minimal non-trivial eigenvalue of $L_k(\bar{G})$.
\end{proof}

\bibliographystyle{abbrv}
\bibliography{biblio}

\end{document}